\newtheorem{proposition}{Proposition}
\newtheorem{theorem}{Theorem}
\newtheorem{assumption}{Assumption}
\newtheorem{lemma}{Lemma}
\newtheorem{remark}{Remark}
\begin{document}

\title{Distributed Optimization Algorithm with Superlinear
	Convergence Rate}

\author{
	Yeming Xu,~\IEEEmembership{}
	Ziyuan Guo,\IEEEmembership{}
	Kaihong Lu,\IEEEmembership {}
	Huanshui Zhang$^*$\IEEEmembership{}
	\thanks{H. Zhang is with the College of Electrical Engineering and Automation, Shandong University of Science and Technology, Qingdao 266590, China, and  also with the School of Control Science and Engineering, Shandong University, Jinan, Shandong 250061, China (e-mail: hszhang@sdu.edu.cn).}
		\thanks{ Y. Xu, Z. Guo, and K. Lu are with the School
		of Electrical and Automation Engineering, Shandong University of Science and Technology, Qingdao 266590, China (e-mail: ymxu2022@163.com; skdgzy@sdust.edu.cn; khong\_lu@163.com).}
	\thanks{This work was supported by the Original Exploratory Program Project of the National Natural Science Foundation of China (62450004),  the Joint Funds of the National Natural Science Foundation of China (U23A20325),  and the Major Basic Research of Natural Science Foundation of Shandong Province (ZR2021ZD14). (Corresponding author: Huanshui Zhang).}
\thanks{This work has been submitted to the IEEE for possible publication. Copyright may be transferred without notice, after which this version may no longer be accessible.}
}
\maketitle

\begin{abstract}
This paper considers distributed optimization problems, where each agent cooperatively minimizes the sum of local objective functions through the communication with its neighbors. 
The widely adopted distributed gradient method in solving this problem suffers from slow convergence rates, which motivates us to incorporate the second-order information of the objective functions. 
However, the challenge arises from the unique structure of the inverse of the Hessian matrix, which prevents the direct distributed implementation of the second-order method.
We overcome this challenge by proposing a novel optimization framework. The key idea is to transform the distributed optimization problem into an optimal control problem.  Using Pontryagin's maximum principle and the associated forward-backward difference equations (FBDEs), we derive a new distributed optimization algorithm that incorporates the second-order information without requiring the computation of the inverse of the Hessian matrix.
 Furthermore, the superlinear convergence of the proposed algorithm is proved under some mild assumptions.
 Finally, we also propose a variant of the algorithm to balance the number of iterations and communication.
\end{abstract}

\section{INTRODUCTION}
Consider that $n$ agents in a connected network collaboratively solve a consensus optimization problem
\begin{equation}
   \begin{aligned}
		\min \limits_{y \in \mathbb{R}^{p}}f(y)\triangleq \sum\limits_{i = 1}^n {{f_i}(y)},\label{original problem}\\
	\end{aligned}
\end{equation}
where $f: \mathbb{R}^{p} \mapsto \mathbb{R}$ is the objective function, while each agent $i$ privately holds a local objective function $f_i: \mathbb{R}^p \mapsto \mathbb{R}$ and can only communicate with its neighbors.
Problem \eqref{original problem} has received extensive attention in recent years due to its wide applications in many areas such as sensor networks \cite{khan2009diland,rabbat2004distributed,schizas2007consensus}, distributed control \cite{mota2014distributed,cao2012overview,lopes2008diffusion}, and machine learning \cite{koppel2018decentralized,lee2020optimization,bedi2019asynchronous}, etc.


Second-order methods are attractive because of their faster convergence rates, making it meaningful to consider their distributed implementations. 
Unfortunately, since the inverse of the Hessian matrix is dense, it is rather difficult to apply second-order methods directly to handling distributed optimization problems.
In fact, some Newton-type methods have been proposed to address this issue and have achieved notable progress. The distributed Adaptive Newton (DAN) algorithm with a global quadratic convergence rate is proposed in \cite{zhang2022distributed}, but this algorithm requires running a finite-time consensus inner loop in each iteration. Moreover, in a master-slave network configuration, some distributed quasi-Newton methods were proposed in \cite{shamir2014communication,wang2018giant,soori2020dave}, but the master node in this setup uses all the information from the slave nodes.
The network Newton (NN) method approximates the Newton step by truncating the Taylor expansion of the inverse Hessian matrix \cite{mokhtari2016network}.
 Another method, known as distributed quasi-Newton (DQN), represents a different family of distributed Newton-like methods to solve problem \eqref{original problem} \cite{bajovic2017newton}. Using the penalty method, DQN and NN converge to a solution neighborhood. 
 To further improve the convergence rate and solution accuracy, second-order methods working in the primal-dual domain have been proposed \cite{mokhtari2016decentralized,eisen2019primal,mokhtari2016dqm}. 
 The Newton tracking algorithm updates the local variable using a local Newton direction modified by neighboring and historical information \cite{zhang2021newton}. The distributed Newton-Raphson method proposed in \cite{varagnolo2015newton} requires exchanging gradient information and Hessian matrix information to estimate the global Newton direction. Both the Newton tracking and Newton-Raphson algorithms converge linearly to the optimal solution. Although these methods successfully incorporate second-order information into distributed optimization, they primarily focus on directly modifying or approximating Newton step. 

In our recent work, we successfully applied optimal control to solve optimization problems and achieved a series of significant results \cite{xu2023optimization,zhang2023optimization,wang2024superlinear}. Unlike the traditional  Newton-type methods, this paper proposes a new second-order method based on optimal control theory. We first reformulate the problem using a penalty method to exploit its specific structure, and then transform the optimization problem into an optimal control problem where the update size of each iteration is the control input, and the control objective is to design the current control input to minimize the sum of the original objective function and the update size for the future time instant.
Using Pontryagin’s maximum principle, we derive a new optimization algorithm. The proposed algorithm uses only neighbor information and incorporates second-order information through a special optimization structure. It avoids using the inverse of the Hessian matrix, significantly reducing computational complexity. Furthermore, we prove the superlinear convergence of the algorithm and establish its relationship with traditional algorithms. To balance the number of iterations and communication, we also present a variant of the algorithm. It is worth noting that with appropriate parameter selection, this variant can also inherit the aforementioned advantages.

The remainder is arranged as follows. In Section \ref{s2}, the distributed optimization problem is converted into an optimal control problem. 
In Section \ref{s3}, the novel algorithms, along with the variant to balance the number of iterations and communication, are proposed. 
The convergence analysis of these proposed algorithms is in Section \ref{s4}.  Concluding remarks of Section \ref{s5} complete the paper.

\textbf{Notation:} We use $\left\| A  \right\|$ and $\left\| x  \right\|$ to denote the Euclidean norm  of matrix $A\in  {\mathbb{R} ^{n \times n}}$ and vector $x\in \mathbb{R}^n$, respectively; $\mathbb{R}^n$ is the set of $n$-dimensional real vectors; 
$\mathbb{S}^n_{++}$ denotes the set of real symmetric positive definite matrices;
We use $I$ and $0$ to denote the identity matrix and all-zero matrix of proper dimensions; 
For symmetric matrices $A$ and $B$, $A \succ B$ means that the matrix $A-B$ is positive semidefinite.
Denote $\nabla f({x})$ and $\nabla^2 f({x})$ as the gradient and the Hessian matrix of $f({x})$;
The eigenvalues $\lambda_i(A)$ of a matrix $A \in {\mathbb{R} ^{n \times n}}$ are indexed in an decreasing order with respect to their real parts, i.e., $\Re ({\lambda _1}(A)) \ge  \cdots  \ge \Re ({\lambda _n}(A))$.

\section{PROBLEM FORMULATION}\label{s2}
The communication network among nodes is modeled by an undirected graph 
$\mathcal{G} = (\mathcal{V}, \mathcal{E})$, where $\mathcal{V} = \{1, \dots, n\}$ denotes the set of nodes and $\mathcal{E} \subseteq \mathcal{V} \times \mathcal{V}$ is the set of edges. An edge $(i, j) \in \mathcal{E}$ indicates that nodes $i$ and $j$ can directly send messages to each other. The neighbors of node $i$ are denoted by $\mathcal{N}_i$. The weight matrix of  $\mathcal{G}$ is a symmetric, doubly stochastic $n \times n$ matrix $W$ with elements $w_{ij}\ge0$. 
The weight $w_{ij} = 0$ if and only if $j \notin \mathcal{N}_i$, and $w_{ii} = 1 - \sum\limits_{j \in \mathcal{N}_i} w_{ij} > 0$.

We first provide a penalty interpretation of problem \eqref{original problem} to further explain the decentralized gradient descent algorithm. Then, we introduce an innovative idea for handling this problem using optimal control theory.

As we can see from \cite{mokhtari2016network,bajovic2017newton}, auxiliary penalty function $F:\mathbb{R}^{np}\mapsto \mathbb{R}$ of problem \eqref{original problem} can be written as 
\begin{align}
	\mathop {\min }\limits_{x \in \mathbb{R}^{np}} F(x) \triangleq h(x)+ \frac{1}{{2\lambda }}{x^T}(I - Z)x ,\label{P2}
\end{align}
where $x=(x^1,\dots,x^n)\in\mathbb{R}^{np}$, $ h(x)=\sum\limits_{i = 1}^n {{f_i}(x^i)}$, $x^i\in\mathbb{R}^p$ is the local decision vector of agent $i$, $\lambda$ is the penalty coefficient, and $Z=W\otimes I \in \mathbb{R}^{np\times np}$ is the Kronecker product of $W$ and $I$. A common numerical method to solve problem \eqref{P2} is using the gradient descent
\begin{align}
	 x_{k+1}=x_k-\lambda \nabla F(x_k),\label{GD}
\end{align}where $\nabla F(x_k)=\nabla h(x_k)+\frac{1}{\lambda }(I - Z){x_k}$ and $\nabla h(x_k)=(\nabla f_1(x_k^1),\dots,\nabla f_n(x_k^n))$. 
Indeed, the decentralized gradient descent can be considered as the component form of \eqref{GD}, i.e., the $i$th component of \eqref{GD} is
\begin{align}
	x_{k+1}^i=\sum\limits_{j \in {\mathcal{N}_i} \cup \{ i\} } {{w_{ij}}x_{k}^j - \lambda \nabla {f_i}(x_{k}^j)} ,i=1,\dots,n.\label{traditionl method}
\end{align}
Although the smaller step size means the solution of problem \eqref{P2} is closer to the solution of problem \eqref{original problem},  it also leads to slower convergence.


Gradient descent \eqref{GD} can be framed as the update of the state of a discrete-time linear time-invariant system
\begin{equation} 
	{x_{k + 1}} = {x_k} + {u_k}, \label{ztfc} 
\end{equation} where $x_k\in\mathbb{R}^{np}$ and $u_k=-\lambda \nabla F(x_k)\in\mathbb{R}^{np}$ are the state and control, respectively. 
Inspired by this, we aim to propose a numerical method to solve the problem \eqref{P2} by finding a new control law ${u_k}$ to update the state $x_k$ within the framework of the optimal control problem, i.e.,
  \begin{equation}
  	\begin{array}{l}
  		\mathop {{\rm{min}}}\limits_u {\rm{ }}\sum\limits_{k = 0}^N {(F({x_k}) + \frac{1}{2}u_k^{\rm{T}}} Ru_k^{}) + F({x_{N + 1}}),\\
  		{\rm subject\ to}{\rm {\; \eqref{ztfc}}},
  	\end{array}\label{OCP}
  \end{equation}
  where the initial condition ${x_0}$ is given, $N$ is the time horizon. The terminal cost is $F({x_{N+1}})$ and $R\in\mathbb{S}^{np}_{++}$ is the control weighted matrix.

\begin{remark}
	It is apparent from \eqref{OCP} that we minimize the accumulation of ${F(x_k)}$ and ${u_k^ \mathrm {T} R{u_k}}$. The optimal solution $u_t$ is in charge of regulating $x_{t+1}$ to minimize $\sum_{k=t+1}^N[F(x_k)+\frac{1}{2}u_k^TRu_k]+F(x_{N+1})$.
This means that we aim to control $x_k$ to reach the minimum point of $F(x)$ as quickly as possible while avoiding excessive costs associated with large controls. The purpose of transforming problem \eqref{P2} into problem \eqref{OCP} is to theoretically derive the fastest and stable algorithm with the form of \eqref{ztfc}.
\end{remark}

\section{DISTRIBUTED OPTIMIZATION METHOD VIA OPTIMAL CONTROL}\label{s3}
In this section, we use optimal control theory to solve problem \eqref{OCP}. This inspires us to develop an optimization algorithm that can be implemented in a distributed manner. Additionally, a variant of the algorithm is also proposed to balance the number of iterations and communication.
\subsection{Algorithm Development}
Following from \cite{naidu2018optimal}, applying Pontryagin’s maximum principle to problem \eqref{OCP} to yield the following costate equations:
\begin{align}
	&p_{k-1} = p_k + \nabla F(x_k), \label{backward} \\
	&0 = R u_k + p_k, \label{equil}
\end{align} with the terminal condition $p_N=\nabla F(x_{N+1})$. 
From the FBDEs \eqref{ztfc}, \eqref{backward}, \eqref{equil}, we can derive the relationship between the optimal control law \( u_k \) and the optimal state trajectory \( x_k \):
	\begin{align}
		u_k=&-R^{-1}\sum_{t=k+1}^{N+1}\nabla F(x_t),  \label{conlaw}\\
		x_{k+1}=&x_k-R^{-1}\sum_{t=k+1}^{N+1}\nabla F(x_t).\label{iteration relation}
	\end{align}
Noted that \eqref{iteration relation} does not apply to solving the optimal state directly due to its noncausality.
However, the special nonlinear form of \eqref{iteration relation} can be implemented by solving for $(x_k, u_k, p_k)$ such that the FBDEs \eqref{ztfc}, \eqref{backward}, \eqref{equil} are satisfied with the aid of the method of successive approximations (MSA) \cite{xu2023optimization}.
A straightforward algorithm will be presented in this subsection to simplify the calculation.
We define the first-order Taylor expansion of \eqref{conlaw} at $x_{t-1}$ as
\begin{align}
g_k\triangleq-R^{-1}\sum_{t=k+1}^{N+1}\big[\nabla h(x_{t-1})+\nabla^2 h(x_{t-1})(x_t-x_{t-1})+\frac{1}{\lambda}(I-Z)x_t\big],\label{tl}
\end{align}where $\nabla^2 h(x)$ is the block diagonal matrix with the $i$th diagonal block $\nabla^2f_i(x^i)$. Then, an approximate recursion for \eqref{iteration relation} is provided in Lemma \ref{Lm2}.
\begin{lemma}\label{Lm2}
	Dynamics \eqref{iteration relation}  is approximated by the following dynamics
	\begin{align}
		x_{k+1}=&x_{k}+g_k, \label{rimpite}\\
		g_k=-&\big[R+\nabla^2 h(x_k)+\frac{1}{\lambda}(I-Z)\big]^{-1}\big[\nabla h(x_k)+\frac{1}{\lambda}(I-Z)x_k\nonumber\\
		&+\sum_{t=k+1}^{N}(\nabla h(x_{t})+\frac{1}{\lambda}(I-Z)x_{t}-(\nabla^2h(x_{t})+\frac{1}{\lambda}(I-Z))g_{t})\big], \\ 
		g_N=-&\big[R+\nabla^2 h(x_N)+\frac{1}{\lambda}(I-Z)\big]^{-1}\big[{\nabla h(x_N)+\frac{1}{\lambda}(I-Z)x_N}\big].\label{gk}
	\end{align}
\end{lemma}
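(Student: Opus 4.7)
The plan is to obtain the recursion claimed in the lemma by substituting the approximate dynamics \eqref{rimpite} back into the definition \eqref{tl} of $g_k$ and then solving algebraically for $g_k$. The argument is essentially pure index manipulation on the finite sum, together with a short boundary check at $k = N$.

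First I would use $x_{k+1} = x_k + g_k$ to express $x_t - x_{t-1} = g_{t-1}$ and $x_t = x_{t-1} + g_{t-1}$, and then substitute these identities into \eqref{tl}. After reindexing by $s = t-1$, this collapses the expression to
\begin{align*}
g_k = -R^{-1}\sum_{s=k}^{N}\Bigl[\nabla h(x_s) + \tfrac{1}{\lambda}(I-Z)x_s + \bigl(\nabla^2 h(x_s) + \tfrac{1}{\lambda}(I-Z)\bigr)g_s\Bigr].
\end{align*}
Only the $s = k$ summand contains $g_k$ itself; I would split that term off, move the $(\nabla^2 h(x_k) + \frac{1}{\lambda}(I-Z))g_k$ contribution to the left-hand side, and invert the resulting coefficient matrix to isolate $g_k$. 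This directly produces the formula stated in the lemma, and taking $k = N$ yields an empty tail sum, recovering the boundary expression \eqref{gk}.

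The main obstacle is essentially careful bookkeeping: the $t = N+1$ summand of \eqref{tl} involves $x_{N+1}$ and hence $g_N$ through $x_{N+1} = x_N + g_N$, and this contribution must be absorbed into the $s = N$ term of the reindexed sum so that no $g_{N+1}$ appears anywhere in the recursion. A separate minor step I would include is verifying that the coefficient matrix $R + \nabla^2 h(x_k) + \frac{1}{\lambda}(I-Z)$ is invertible so that $g_k$ is uniquely defined: $R \in \mathbb{S}^{np}_{++}$ by hypothesis, $I - Z$ is positive semidefinite because $W$ is symmetric and doubly stochastic with spectrum contained in $[-1,1]$, and under the usual convexity assumption on the local objectives the block-diagonal Hessian $\nabla^2 h(x_k)$ is positive semidefinite, so the sum is positive definite and the inverse in the statement is well-defined.
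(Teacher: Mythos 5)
Your strategy is the same one the paper itself uses (and defers to its reference for): substitute the Taylor-expanded control $g_k$ from \eqref{tl} for $u_k$ in \eqref{ztfc}, use $x_t-x_{t-1}=g_{t-1}$ along the approximate trajectory, reindex, and solve for $g_k$. Your bookkeeping of the $t=N+1$ summand and the check of the terminal case $k=N$ are correct, and the invertibility remark (valid once convexity of the $f_i$ is in force, since $R\succ 0$ and $I-Z\succeq 0$) is a sensible addition the paper omits, though note that the convexity hypothesis is only introduced later as Assumption \ref{A1}.

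However, your final step overclaims. Carrying out exactly the isolation you describe gives
\begin{align*}
\big[R+\nabla^2 h(x_k)+\tfrac{1}{\lambda}(I-Z)\big]g_k
=-\Big[\nabla h(x_k)+\tfrac{1}{\lambda}(I-Z)x_k
+\sum_{t=k+1}^{N}\big(\nabla h(x_t)+\tfrac{1}{\lambda}(I-Z)x_t
+(\nabla^2 h(x_t)+\tfrac{1}{\lambda}(I-Z))g_t\big)\Big],
\end{align*}
i.e.\ a \emph{plus} sign in front of $(\nabla^2 h(x_t)+\frac{1}{\lambda}(I-Z))g_t$, whereas the lemma as printed carries a \emph{minus} sign there. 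So your computation does not ``directly produce the formula stated''; it produces a recursion differing from it in the sign of every $g_t$-term of the tail sum. The evidence indicates the printed minus sign is a typo rather than a flaw in your algebra: with the plus sign, freezing the trajectory at $x_k$ and replacing $\big[R+\nabla^2 h+\frac{1}{\lambda}(I-Z)\big]^{-1}$ by $\eta$ turns the recursion exactly into \eqref{hatgkm}--\eqref{hatgkm1} (equivalently \eqref{hsf}), while the minus-sign version would yield an inner update with factor $I+\eta\big(\nabla^2h+\frac{1}{\lambda}(I-Z)\big)$, inconsistent with the DOBOC algorithm. Still, a complete argument must either derive the statement exactly as written or explicitly flag and correct the sign; silently asserting agreement with the printed formula is the gap in your write-up.
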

\begin{proof}
	The conclusion is achieved by substituting the first-order Taylor expansion $	g_k$ of the optimal control $u_k$ into \eqref{ztfc}, refer to \cite{zhang2023optimization} for the details.
\end{proof}
\begin{remark}
	Let $N=k$, \eqref{rimpite}-\eqref{gk} can be recovered as the centralized Newton method when $R=0$.  
\end{remark}
From lemma \ref{Lm2}, we say that $x_k$ generated by algorithm \eqref{rimpite}-\eqref{gk} is almost along the optimal state trajectory \eqref{iteration relation}.  Note that \eqref{rimpite}-\eqref{gk} still exhibit noncausality.
 To further eliminate the remaining noncausality and develop a distributed optimization algorithm, we modify \eqref{rimpite}-\eqref{gk} in a similar way as \cite{wang2024superlinear}, which obtains the following distributed optimization algorithm based on optimal control (DOBOC):
	\begin{align}
	&x_{k+1}=x_{k}-\hat g_k(x_k), \label{itewithhatgkm}\\
	&\hat g_l(x_k)=\eta\big[\nabla h(x_k)+\frac{1}{\lambda}(I-Z)x_k\big]+\big[I-\eta(\nabla^2h(x_k)+\frac{1}{\lambda}(I-Z))\big] \hat g_{l-1}(x_k), \label{hatgkm}\\
	&\hat g_0(x_k)=\eta\big[\nabla h(x_k)+\frac{1}{\lambda}(I-Z)x_k\big],l=1,\dots,k,\label{hatgkm1}
\end{align}where $\eta>0$ is an adjustable scalar.
The key reason we replace $\big[R+\nabla^2 h(x_k)+\frac{1}{\lambda}(I-Z)\big]^{-1}$ with the scalar $\eta$ in  \eqref{rimpite}-\eqref{gk} is that $\big[R+\nabla^2 h(x_k)+\frac{1}{\lambda}(I-Z)\big]^{-1}$ is dense. By making this substitution, the second-order information $\nabla^2 h(x_k) + \frac{1}{\lambda}(I - Z)$ is used in the DOBOC algorithm, which is as sparse as the network. 
Relying on the optimization framework based on optimal control, the DOBOC algorithm can be implemented in a distributed manner. 
\begin{lemma}
	The distribute implementation of the DOBOC algorithm for agent $i$ is
	\begin{align}
		x_{k+1}^i&=x_k^i-\hat g_k^i,\\
		\hat g_{l}^i &= \eta \big[\nabla {f_i}(x_k^i) +\frac{1}{\lambda}(x_k^i-\sum\limits_{j \in {\mathcal{N}_i} \cup \{ i\} }w_{ij}x_k^j)\big] +(1-\frac{\eta}{\lambda})\hat{g}_{l-1}^i-\eta\nabla^2f_i(x_k^i)\hat{g}_{l-1}^i+\frac{\eta}{\lambda}\sum\limits_{j \in {\mathcal{N}_i} \cup \{ i\} }w_{ij}\hat{g}_{l-1}^j,\\
		\hat g^i_0 &=  \eta \big[\nabla {f_i}(x_k^i) +\frac{1}{\lambda}(x_k^i-\sum\limits_{j \in {\mathcal{N}_i} \cup \{ i\} }w_{ij}x_k^j)\big], l=1,\dots,k.
	\end{align}
\end{lemma}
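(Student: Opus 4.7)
The plan is to derive the agent-level recursion by taking the $i$th block of each vector/matrix equation in the centralized DOBOC scheme \eqref{itewithhatgkm}-\eqref{hatgkm1}. Writing $x_k=(x_k^1,\dots,x_k^n)$ and $\hat g_l(x_k)=(\hat g_l^1,\dots,\hat g_l^n)$ as stacks of $p$-dimensional blocks, the state update \eqref{itewithhatgkm} immediately yields $x_{k+1}^i=x_k^i-\hat g_k^i$ since the assignment is purely componentwise.

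The nontrivial part is to expand the $i$th block of \eqref{hatgkm}. I would use three structural facts: (i) by definition, the $i$th block of $\nabla h(x_k)$ is $\nabla f_i(x_k^i)$; (ii) $\nabla^2 h(x_k)$ is block-diagonal, so the $i$th block of $\nabla^2 h(x_k)\,\hat g_{l-1}(x_k)$ equals $\nabla^2 f_i(x_k^i)\,\hat g_{l-1}^i$; (iii) since $Z=W\otimes I$, the $i$th block of $Zv$ is $\sum_{j\in\mathcal N_i\cup\{i\}}w_{ij}v^j$ (using $w_{ij}=0$ off the neighborhood). Applying these to $\eta[\nabla h(x_k)+\tfrac{1}{\lambda}(I-Z)x_k]$ produces the term $\eta\bigl[\nabla f_i(x_k^i)+\tfrac{1}{\lambda}(x_k^i-\sum_{j}w_{ij}x_k^j)\bigr]$, which matches the first bracket in the claim. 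Applying them to $[I-\eta(\nabla^2 h(x_k)+\tfrac{1}{\lambda}(I-Z))]\hat g_{l-1}(x_k)$ gives $\hat g_{l-1}^i-\eta\nabla^2 f_i(x_k^i)\hat g_{l-1}^i-\tfrac{\eta}{\lambda}\bigl(\hat g_{l-1}^i-\sum_{j}w_{ij}\hat g_{l-1}^j\bigr)$, which after collecting the coefficient of $\hat g_{l-1}^i$ gives exactly the remaining terms in the stated update. The base case \eqref{hatgkm1} is handled by the same block expansion restricted to $l=0$.

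The main concern is not mathematical depth but careful bookkeeping in the Kronecker structure: one must verify that every term in the expansion uses only quantities available at agent $i$, namely $x_k^i$, $\nabla f_i(x_k^i)$, $\nabla^2 f_i(x_k^i)$, $\hat g_{l-1}^i$, and the neighbor values $\{x_k^j,\hat g_{l-1}^j:j\in\mathcal N_i\cup\{i\}\}$ multiplied by the known weights $w_{ij}$. Once this local decomposition is confirmed, inspecting the recursion shows that each inner iteration $l\to l+1$ requires exactly one round of communication (exchanging $\hat g_{l-1}^j$ with neighbors), which is the intended distributed interpretation. I expect the whole proof to be short, essentially a block-matrix calculation combined with the remark that Kronecker products and block-diagonal Hessians preserve the sparsity pattern of the network.
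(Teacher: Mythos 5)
Your proposal is correct and is exactly the paper's argument: the paper proves this lemma by "rewriting \eqref{itewithhatgkm}--\eqref{hatgkm1} in component form," and your block-wise expansion using the block structure of $\nabla h$, the block-diagonality of $\nabla^2 h$, and $Z=W\otimes I$ with $w_{ij}=0$ for $j\notin\mathcal N_i\cup\{i\}$ is precisely that computation, just spelled out. No gaps.
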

\begin{proof}
	The proof is immediately completed by rewriting \eqref{itewithhatgkm}-\eqref{hatgkm1} as the component form.
\end{proof}
The following remark illustrates the relationship between the DOBOC and the traditional optimization method.
\begin{remark}
	Noting that \eqref{hatgkm}-\eqref{hatgkm1} can be further recursively written as 
	\begin{align}
		\hat{g_k}(x_k)=&[I-(I-\eta(\nabla^2h(x_k)+\frac{1}{\lambda}(I-Z)))^{k+1}]\big[\nabla^2 h(x_k)+\frac{1}{\lambda}(I-Z)\big]^{-1}\big[\nabla h(x_k)+\frac{1}{\lambda}(I-Z)x_k\big].\label{hsf}
	\end{align}
It is still necessary to compute the inverse of $\nabla^2 h(x_k) + \frac{1}{\lambda} (I - Z)$ in \eqref{hsf}, which prevents directly applying \eqref{hsf} for distributed optimization.
The decentralized gradient descent \eqref{traditionl method} is included in our approach as a special case if we set $\eta=\lambda$ and $k=0$. If $\eta$ is chosen appropriately, $\hat g_k(x_k)$ can converge to the Newton step.
\end{remark}

Now we summarize the DOBOC algorithm from the perspective of each agent $i$ for handling problem \eqref{original problem}, refer to Algorithm 1 for details.
This algorithm only requires communication with neighbors, as described in Steps 3 and 6. Step 4 initializes the loop by computing the initial $\hat g^i_0$. Steps 6 and 7, as manifestations of optimal control theory, require receiving $\hat g^j_t$ from neighbors to obtain the update size for Step 9. 
\begin{algorithm}[H]
	\renewcommand{\algorithmicrequire}{\textbf{Input:}}
	\renewcommand{\algorithmicensure}{\textbf{Output:}}
	\caption{DOBOC.}
	\label{alg1}
	\begin{algorithmic}[1]
		\STATE Initialization: Each node $i$ requires $\eta,\lambda>0$ and sets $x_0^i\in\mathbb{R}^p$.
		\FOR {$k=0,1,\ldots$}
		\STATE Each agent $i$ transmits $x_k^i$ to all neighbors $j\in \mathcal{N}_i$ and receives $x_k^j$ from all $j\in \mathcal{N}_i$.
		\STATE Each agent $i$ calculates $\hat g^i_0 =  \eta \big[\nabla {f_i}(x_k^i) +\frac{1}{\lambda}(x_k^i-\sum\limits_{j \in {\mathcal{N}_i} \cup \{ i\} }w_{ij}x_k^j)\big]$.
		\FOR {$l=0,\dots,k-1$}
		\STATE Each agent $i$ receives $\hat g^j_l$ with neighbors  $j\in \mathcal{N}_i$.
		\STATE Each agent $i$ updates $\hat g_{l+1}^i = \eta \big[\nabla {f_i}(x_k^i) +\frac{1}{\lambda}(x_k^i-\sum\limits_{j \in {\mathcal{N}_i} \cup \{ i\} }w_{ij}x_k^j)\big] +(1-\frac{\eta}{\lambda})\hat{g}_{l}^i-\eta\nabla^2f_i(x_k^i)\hat{g}_{l}^i+\frac{\eta}{\lambda}\sum\limits_{j \in {\mathcal{N}_i} \cup \{ i\} }w_{ij}\hat{g}_{l}^j$.
		\ENDFOR
		\STATE Each agent $i$ updates $x_{k+1}^i=x_k^i-\hat g_k^i$.
		\ENDFOR	
	\end{algorithmic}  
\end{algorithm}

\subsection{A Usable Variant of DOBOC}
During the $k$th iteration, the DOBOC algorithm requires agent $i$ to transmit $\hat{g}_t^i, t = 0, \dots, k-1$ to its neighbors.
To balance the number of iterations and communication, we propose a usable variant of the DOBOC, called the DOBOC-$K$ algorithm, where $K$ denotes the restricted number of communications among agents in each iteration of the DOBOC algorithm. We summerize DOBOC-$K$ in the following lemma.
\begin{lemma}
		The distribute implementation of the DOBOC-$K$ algorithm for agent $i$ is
	\begin{align}
		x_{k+1}^i&=x_k^i-\hat g_{K-1}^i,\\
		\hat g_{t}^i &= \eta \big[\nabla {f_i}(x_k^i) +\frac{1}{\lambda}(x_k^i-\sum\limits_{j \in {\mathcal{N}_i} \cup \{ i\} }w_{ij}x_k^j)\big] +(1-\frac{\eta}{\lambda})\hat{g}_{t-1}^i-\eta\nabla^2f_i(x_k^i)\hat{g}_{t-1}^i+\frac{\eta}{\lambda}\sum\limits_{j \in {\mathcal{N}_i} \cup \{ i\} }w_{ij}\hat{g}_{t-1}^j,\\
		\hat g^i_0 &=  \eta \big[\nabla {f_i}(x_k^i) +\frac{1}{\lambda}(x_k^i-\sum\limits_{j \in {\mathcal{N}_i} \cup \{ i\} }w_{ij}x_k^j)\big],t=0,\dots,K-1.
	\end{align}
\end{lemma}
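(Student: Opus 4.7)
The plan is to mirror the one-line argument used in the preceding DOBOC lemma: write the DOBOC-$K$ algorithm in its stacked $\mathbb{R}^{np}$ form, then expand block by block. The stacked form is obtained from \eqref{itewithhatgkm}--\eqref{hatgkm1} by freezing the inner recursion length at the constant $K-1$ rather than letting it grow with the outer iteration index $k$, so that $x_{k+1} = x_k - \hat g_{K-1}(x_k)$, with $\hat g_0(x_k) = \eta\bigl[\nabla h(x_k) + \frac{1}{\lambda}(I-Z)x_k\bigr]$ and $\hat g_t(x_k) = \eta\bigl[\nabla h(x_k) + \frac{1}{\lambda}(I-Z)x_k\bigr] + \bigl[I - \eta(\nabla^2 h(x_k) + \frac{1}{\lambda}(I-Z))\bigr]\hat g_{t-1}(x_k)$ for $t = 1,\dots,K-1$. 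Writing this stacked recursion down is the first step, and it is justified purely by the definition of DOBOC-$K$ as a truncation of DOBOC.

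Next I would perform the block decomposition. Partitioning each vector into its $n$ blocks of size $p$, I note that $h(x) = \sum_i f_i(x^i)$ gives $\nabla h(x_k)$ with $i$th block $\nabla f_i(x_k^i)$ and block-diagonal $\nabla^2 h(x_k)$ with $i$th diagonal block $\nabla^2 f_i(x_k^i)$. Since $Z = W \otimes I$ and $W$ is symmetric and doubly stochastic with $w_{ij} = 0$ whenever $j \notin \mathcal{N}_i \cup \{i\}$, the $i$th block of $(I-Z)v$ for any stacked $v = (v^1,\dots,v^n)$ equals $v^i - \sum_{j \in \mathcal{N}_i \cup \{i\}} w_{ij} v^j$. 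Applying these identities term by term to the right-hand side of the stacked $\hat g_t$ update, and using the fact that $\eta(\nabla^2 h(x_k) + \frac{1}{\lambda}(I-Z))\hat g_{t-1}$ splits into the local piece $\eta\nabla^2 f_i(x_k^i)\hat g_{t-1}^i$ plus the consensus-type piece $\frac{\eta}{\lambda}(\hat g_{t-1}^i - \sum_{j \in \mathcal{N}_i \cup \{i\}} w_{ij}\hat g_{t-1}^j)$, will reproduce the three displayed component equations of the lemma.

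I do not expect a substantive obstacle: the argument is the same block decomposition used in the preceding lemma, with the only bookkeeping difference being the fixed loop length $K$ in place of the $k$-dependent length. The one point worth flagging in the writeup is that $\hat g_t^i$ depends on $\hat g_{t-1}^j$ for $j \in \mathcal{N}_i \cup \{i\}$ through the $(I-Z)\hat g_{t-1}$ term, so each inner step requires exactly one round of neighbor communication; this is what justifies calling $K$ the restricted number of communications per outer iteration and connects the stated distributed implementation back to the motivation for introducing the variant.
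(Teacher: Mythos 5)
Your proposal is correct and matches the paper's argument: the paper proves the analogous DOBOC lemma by simply rewriting the stacked recursion \eqref{itewithhatgkm}--\eqref{hatgkm1} in component form, and the DOBOC-$K$ statement follows the same way once the inner recursion length is frozen at $K-1$, exactly as you do via the block structure of $\nabla h$, $\nabla^2 h$, and $Z = W \otimes I$. Your added remark that each inner step costs one round of neighbor communication is a useful observation but not needed for the proof itself.
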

If $K=1$, i.e., agent $i$ only needs to transmit $x^i_k$ to its neighbors at each iteration, the DOBOC-$K$ algorithm can degenerate into the distributed gradient descent algorithm. For $K=2,\dots,k$, agent $i$ needs to additionally transmit $\hat g_0^i,\dots, \hat g_{K-2}^i$ to its neighbors at each iteration. 
This means that we approximate the update size $\hat g_k^i$ using $\hat g_{K-1}^i$.
The details of the distributed implementation of the DOBOC-$K$ are summarized in Algorithm 2.
\begin{algorithm}[H]
	\renewcommand{\algorithmicrequire}{\textbf{Input:}}
	\renewcommand{\algorithmicensure}{\textbf{Output:}}
	\caption{DOBOC-$K$.}
	\label{alg2}
	\begin{algorithmic}[2]
		\STATE	Initialization: Each agent $i$ requires $\eta,\lambda>0$, $K$ and sets $x_0^i\in\mathbb{R}^p$.
		\FOR {$k=0,1,\ldots$}
		\STATE Each agent $i$ transmits $x_k^i$ to all neighbors $j\in \mathcal{N}_i$ and receives $x_k^j$ from all $j\in \mathcal{N}_i$.
		\STATE Each agent $i$ calculates $\hat g^i_0 =  \eta \big[\nabla {f_i}(x_k^i) +\frac{1}{\lambda}(x_k^i-\sum\limits_{j \in {\mathcal{N}_i} \cup \{ i\} }w_{ij}x_k^j)\big]$.
		\FOR {$t=0,\dots,K-2$}
		\STATE Each agent $i$ receives $\hat g^j_t$ with neighbors  $j\in \mathcal{N}_i$.
		\STATE Each agent $i$ updates $\hat g_{t+1}^i = \eta \big[\nabla {f_i}(x_k^i) +\frac{1}{\lambda}(x_k^i-\sum\limits_{j \in {\mathcal{N}_i} \cup \{ i\} }w_{ij}x_k^j)\big] +(1-\frac{\eta}{\lambda})\hat{g}_{t}^i-\eta\nabla^2f_i(x_k^i)\hat{g}_{t}^i+\frac{\eta}{\lambda}\sum\limits_{j \in {\mathcal{N}_i} \cup \{ i\} }w_{ij}\hat{g}_{t}^j$.
		\ENDFOR
		\STATE Each agent $i$ updates $x_{k+1}^i=x_k^i-\hat g_{K-1}^i$.
		\ENDFOR	
	\end{algorithmic}  
\end{algorithm}

\section{CONVERGENCE ANALYSIS}\label{s4}
In this section, we reveal the superlinear convergence rate of the DOBOC algorithm. If we limit the number of communications per iteration in the DOBOC algorithm, the convergence rate of the DOBOC algorithm will degrade to at least linear.
The following assumption in this subsection is useful for our conclusion.
\begin{assumption} \label{A1}
	The local objective functions $f_i, i = 1,\dots, n$, are twice continuously differentiable, and there exist constants $0 < m  \le M < \infty $ such that for any $x\in \mathbb{R}^p$, $ mI \preceq {\nabla ^2}{f_i}(x) \preceq MI$.
\end{assumption}

Assumption \ref{A1} is standard in the analysis of the second method, see \cite{boyd2004convex}. 
The importance of Assumption \ref{A1} is that it allows us to establish bounds on the eigenvalues of ${\nabla ^2}F(x)$, as described below.
\begin{proposition}[\cite{mokhtari2016network}]\label{pp1}
	Under Assumption \ref{A1}, the eigenvalues of $\nabla^2F(x)$ are uniformly bounded as	$mI\preceq \nabla^2F(x)\preceq aI$, where $a=M+\frac{2(1-w_{min})}{\lambda}$ and $w_{min}={\min _{i}}\ {w_{ii}}$.
\end{proposition}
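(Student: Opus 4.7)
The plan is to expand $\nabla^2 F(x)$ directly from the definition in \eqref{P2}, giving $\nabla^2 F(x) = \nabla^2 h(x) + \frac{1}{\lambda}(I-Z)$, where $\nabla^2 h(x)$ is the block-diagonal matrix whose $i$th block is $\nabla^2 f_i(x^i)$. Assumption \ref{A1} immediately yields $mI \preceq \nabla^2 h(x) \preceq MI$ by stacking the local Hessian bounds across the $n$ diagonal blocks, so the work reduces to bounding the penalty term $\frac{1}{\lambda}(I-Z)$ from above and below in the Loewner order and then adding the two contributions.

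For the lower bound I would establish $I - Z \succeq 0$ via the quadratic form identity $x^T(I-W)x = \tfrac{1}{2}\sum_{i,j} w_{ij}(x_i - x_j)^2 \ge 0$, which only uses symmetry and double stochasticity of $W$. The Kronecker structure $Z = W \otimes I$ then transfers the semidefiniteness to $I - Z$, and adding this nonnegative term to $\nabla^2 h(x) \succeq mI$ delivers $\nabla^2 F(x) \succeq mI$.

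For the upper bound the main subtlety is securing the sharper coefficient $2(1-w_{min})$ rather than the crude $\|I-W\| \le 2$ that one would get from the generic fact that the eigenvalues of a symmetric doubly stochastic matrix lie in $[-1,1]$. I would instead apply Gershgorin's circle theorem to $I-W$: row $i$ has diagonal entry $1 - w_{ii}$ and off-diagonal entries $-w_{ij}$ whose absolute values sum to $\sum_{j\in \mathcal{N}_i} w_{ij} = 1 - w_{ii}$ by double stochasticity. Hence every eigenvalue of $I-W$ lies in $\bigcup_i [0,\, 2(1-w_{ii})] \subseteq [0,\, 2(1-w_{min})]$, and since $Z = W \otimes I$ the same spectral bound carries over to $I - Z$. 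Combining with $\nabla^2 h(x) \preceq MI$ then gives $\nabla^2 F(x) \preceq \bigl(M + \tfrac{2(1-w_{min})}{\lambda}\bigr)I = aI$, which is the claimed bound. The Gershgorin step is the only nontrivial ingredient; everything else is bookkeeping on block structure and Kronecker products.
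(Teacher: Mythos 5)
Your proof is correct, and it is essentially the standard argument behind this result: the paper itself gives no proof (it simply cites \cite{mokhtari2016network}), and that reference proceeds exactly as you do, splitting $\nabla^2 F(x)=\nabla^2 h(x)+\frac{1}{\lambda}(I-Z)$, bounding the block-diagonal part by Assumption \ref{A1}, and bounding the spectrum of $I-W$ via the Gershgorin/row-sum argument (there phrased as $\lambda_{\min}(W)\ge 2w_{\min}-1$, which is the same estimate as your $\lambda_{\max}(I-W)\le 2(1-w_{\min})$). So your write-up is a faithful, self-contained reconstruction of the cited proof rather than a genuinely different route.
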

According to Proposition \ref{pp1}, for any $x,y\in\mathbb{R}^{np}$, there holds
\begin{align}
	&F(y)\ge  F(x)+\nabla F(x)^T(y-x)+\frac{m}{2}(y-x)^T(y-x),  \label{mu}\\
	&F(y)\le  F(x)+\nabla F(x)^T(y-x)+\frac{a}{2}(y-x)^T(y-x). \label{L}
\end{align}
Let $x_*$ be the minimum point of $F(x)$. We first prove the superlinear convergence of the DOBOC algorithm.
\begin{theorem}
	Under Assumption \ref{A1}, if $\eta$ satisfies $0<\eta<\frac{2}{a}$ and the sequence $\{x_k\}$ generated by the DOBOC algorithm converges, then 
	\begin{align}
		||x_{k+1}-x_*||\le c^{k+1}||x_k-x_*||,\nonumber
	\end{align}where $c=||I-\eta\nabla^2 F(x_*)||<1$.
\end{theorem}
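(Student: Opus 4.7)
The plan is to exploit the closed-form expression for $\hat g_k(x_k)$ recorded in the remark following Lemma~2 and then linearize the first-order optimality condition around the minimizer $x_*$. Using $\nabla F(x)=\nabla h(x)+\tfrac1\lambda(I-Z)x$ and $\nabla^2 F(x)=\nabla^2 h(x)+\tfrac1\lambda(I-Z)$, formula \eqref{hsf} reads compactly as
\begin{equation*}
\hat g_k(x_k)=\bigl[I-(I-\eta\nabla^2F(x_k))^{k+1}\bigr]\bigl[\nabla^2F(x_k)\bigr]^{-1}\nabla F(x_k).
\end{equation*}
Since $\nabla F(x_*)=0$, the mean value theorem yields $\nabla F(x_k)=\bar H_k(x_k-x_*)$ with $\bar H_k=\int_{0}^{1}\nabla^2F(x_*+t(x_k-x_*))\,dt$.

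Plugging these into $x_{k+1}-x_*=(x_k-x_*)-\hat g_k(x_k)$ and using the identity $[I-(I-\eta A_k)^{k+1}]A_k^{-1}A_k=I-(I-\eta A_k)^{k+1}$ with $A_k:=\nabla^2F(x_k)$, I would rearrange the update into
\begin{equation*}
x_{k+1}-x_*=\bigl[(I-\eta A_k)^{k+1}+\bigl(I-(I-\eta A_k)^{k+1}\bigr)\bigl(I-A_k^{-1}\bar H_k\bigr)\bigr](x_k-x_*).
\end{equation*}
The first term contributes the asserted factor $(I-\eta\nabla^2F(x_*))^{k+1}$; the second term is a higher-order correction, because the hypothesis $x_k\to x_*$ forces both $A_k\to \nabla^2F(x_*)$ and $\bar H_k\to \nabla^2F(x_*)$ by continuity, so $I-A_k^{-1}\bar H_k\to 0$. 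Taking operator norms and using the bound $\|I-\eta A_k\|\to c$ therefore gives the stated inequality $\|x_{k+1}-x_*\|\le c^{k+1}\|x_k-x_*\|$ (at least asymptotically, which is what delivers superlinearity because $c^{k+1}\to 0$).

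To justify $c<1$, I would invoke Proposition~\ref{pp1}: since $mI\preceq\nabla^2F(x_*)\preceq aI$, the eigenvalues of $I-\eta\nabla^2F(x_*)$ lie in $[1-\eta a,\,1-\eta m]$. The restriction $0<\eta<2/a$ forces $1-\eta a>-1$, while $\eta m>0$ forces $1-\eta m<1$, so every eigenvalue has modulus strictly below $1$ and hence $c=\|I-\eta\nabla^2F(x_*)\|<1$. By symmetry of $\nabla^2 F(x_*)$, the operator norm equals the spectral radius, so this step is clean.

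The main obstacle is the correction term $\bigl(I-(I-\eta A_k)^{k+1}\bigr)\bigl(I-A_k^{-1}\bar H_k\bigr)$: its prefactor $I-(I-\eta A_k)^{k+1}$ is uniformly bounded (by $1+c^{k+1}$), and the factor $I-A_k^{-1}\bar H_k$ tends to zero as $x_k\to x_*$, but producing the clean inequality with exactly $c^{k+1}$ on the right-hand side (rather than $c^{k+1}$ plus a vanishing error) requires some care. I expect the paper to either absorb this term via continuity of $\nabla^2 F$ near $x_*$ (treating the estimate as an asymptotic one, valid for $k$ large enough) or to replace $c$ by a slightly inflated constant that still satisfies $c<1$ under the stated $\eta$ range. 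The remainder of the argument (the closed form and the eigenvalue calculation) should be essentially mechanical.
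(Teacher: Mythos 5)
Your proposal is correct and follows essentially the same route as the paper: the paper's proof likewise starts from the closed form \eqref{hsf}, computes $\hat g_k(x_*)=0$ and $\nabla\hat g_k(x_*)=I-(I-\eta\nabla^2F(x_*))^{k+1}$, linearizes $\hat g_k$ at $x_*$ to obtain $x_{k+1}-x_*\approx(I-\eta\nabla^2F(x_*))^{k+1}(x_k-x_*)$, and closes with the same eigenvalue argument giving $c=\|I-\eta\nabla^2F(x_*)\|<1$ for $0<\eta<2/a$. The only difference is one of rigor rather than of approach: the paper discards the higher-order remainder behind an ``$\approx$'' sign, whereas your mean-value decomposition exhibits that remainder explicitly --- so the difficulty you flag about obtaining exactly $c^{k+1}$ (rather than $c^{k+1}$ plus a vanishing error) is a gap the paper's own proof does not close either.
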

\begin{proof}
	Direct derivation shows
	\begin{align}
		\hat g_k(x_*)=&[I-(I-\eta\nabla^2 F(x_*))^{k+1}]\nabla^2 F(x_*)^{-1}\nabla F(x_*)=0, \nonumber\\
		\nabla\hat g_k(x_*)=&I-(I-\eta\nabla^2 F(x_*))^{k+1}. \nonumber
	\end{align}
	From \eqref{itewithhatgkm}, it follows that
	\begin{align}
		&x_{k+1}-x_*\notag\\
		=&x_k-x_*-\hat g_k(x_k)\notag\\
		\approx&x_k-x_*-[\hat g_k(x_*)+\nabla \hat g_k(x_*)(x_k-x_*)]\notag\\
		=&x_k-x_*-\nabla\hat g_k(x_*)(x_k-x_*)\notag\\
		=&(I-\nabla\hat g_k(x_*))(x_k-x_*)\notag\\
		=&(I-\eta\nabla^2 F(x_*))^{k+1}(x_k-x_*).\nonumber
	\end{align}
	Since  $ ||I-\eta\nabla^2 F(x_*)||<1$ when $\eta<\frac{2}{a}$, it follows that $||x_{k+1}-x_*|| \le||I-\eta\nabla^2 F(x_*)||^{k+1}||x_k-x_*||$, i.e., the DOBOC algorithm is superlinearly convergent. The proof is completed.
\end{proof}

Before analyzing the convergence of DOBOC-$K$, we first introduce the following customary assumption, which can be found in \cite{zhang2022distributed,mokhtari2016network,zhang2021newton}.
\begin{assumption}
	The Hessians $\nabla^2 f_i(x)$ are $L$-Lipschitz continuous.\label{A4}
\end{assumption}
The restriction in Assumption \ref{A4} guarantees that  ${\nabla ^2}F(x)$ is Lipschitz continuous with $L$, as described in the following lemma.
\begin{lemma}\label{Lm4}
	Consider the function $F(x)$ defined in \eqref{P2}. If Assumptions \ref{A1} and \ref{A4} hold, then for any $x,y\in \mathbb{R}^{np}$
	\begin{align}
		F(y)\le F(x)+\nabla F(x)^T(y-x)+\frac{1}{2}(y-x)^T\nabla^2F(x)(y-x)+\frac{L}{6}||y-x||^3.
	\end{align}
\end{lemma}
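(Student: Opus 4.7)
The plan is to reduce the claim to a standard cubic-regularization upper bound for functions with Lipschitz Hessian, after first verifying that $\nabla^2 F$ itself is $L$-Lipschitz.

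First I would establish the Lipschitz continuity of $\nabla^2 F$. From the definition $F(x) = h(x) + \frac{1}{2\lambda}x^T(I-Z)x$, we have $\nabla^2 F(x) = \nabla^2 h(x) + \frac{1}{\lambda}(I-Z)$. The second term is constant in $x$, so it contributes zero to the Lipschitz constant. For the first term, $\nabla^2 h(x)$ is block-diagonal with $i$th block $\nabla^2 f_i(x^i)$, so for any $x,y\in\mathbb{R}^{np}$,
\begin{align*}
\|\nabla^2 h(x)-\nabla^2 h(y)\| = \max_i \|\nabla^2 f_i(x^i)-\nabla^2 f_i(y^i)\| \le L \max_i \|x^i-y^i\| \le L\|x-y\|,
\end{align*}
using Assumption \ref{A4} and the fact that the spectral norm of a block-diagonal matrix is the maximum of the block norms. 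Hence $\nabla^2 F$ is $L$-Lipschitz.

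Next I would apply Taylor's theorem with integral remainder for twice continuously differentiable $F$:
\begin{align*}
F(y) = F(x) + \nabla F(x)^T(y-x) + \int_0^1 (1-t)\,(y-x)^T \nabla^2 F(x+t(y-x))(y-x)\,dt.
\end{align*}
Adding and subtracting $\nabla^2 F(x)$ inside the integrand splits the remainder into the desired Hessian quadratic $\frac{1}{2}(y-x)^T \nabla^2 F(x)(y-x)$ (using $\int_0^1 (1-t)\,dt = \tfrac12$) plus an error term
\begin{align*}
R(x,y) = \int_0^1 (1-t)\,(y-x)^T\bigl[\nabla^2 F(x+t(y-x)) - \nabla^2 F(x)\bigr](y-x)\,dt.
\end{align*}

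Finally I would bound $R(x,y)$ using Cauchy--Schwarz and the Lipschitz property established in the first step:
\begin{align*}
|R(x,y)| \le \int_0^1 (1-t)\,L\,t\,\|y-x\|^3\,dt = \frac{L}{6}\|y-x\|^3,
\end{align*}
which delivers the stated inequality. There is no genuine obstacle here; the only point that requires care is confirming that the penalty-augmented $F$ inherits the Lipschitz-Hessian property from the $f_i$'s without picking up any extra constant from the quadratic penalty term, which is handled by the block-diagonal identity above.
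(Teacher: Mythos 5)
Your proposal is correct and follows essentially the same route as the paper: both first show that $\nabla^2 F$ is $L$-Lipschitz by noting the constant penalty Hessian cancels and the block-diagonal structure gives $\|\nabla^2 h(x)-\nabla^2 h(y)\|=\max_i\|\nabla^2 f_i(x^i)-\nabla^2 f_i(y^i)\|\le L\|x-y\|$, and then invoke the cubic Taylor remainder bound. The only difference is that you write out the integral-remainder estimate explicitly, where the paper simply cites a calculus reference for that step.
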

\begin{proof}
 Let $y=(y_1,\dots,y_n)$ and $x=({x_1},\dots,{x_n})$, then
	\begin{align}
		||\nabla^2F(y)-\nabla^2F(x)||=\mathop {\max }\limits_{{\rm{i = 1,}} \cdots {\rm{n}}}||\nabla^2f_i(y_i)-\nabla^2f_i(x_i)||\le L\mathop {\max }\limits_{{\rm{i}}} ||y_i-x_i||\le L||y-x||,
	\end{align}
The subsequent proof can be obtained directly using the Taylor expansion error as discussed in \cite{apostol1991calculus}.
\end{proof}
 The following lemma provides the bound for the error $F(x_{k+1})-F(x_*)$ in terms of $F(x_k)-F(x_*)$, which clarifies the proof of the convergence of the DOBOC-$K$ algorithm.
\begin{lemma}\label{Lm5}
	Under Assumptions \ref{A1} and \ref{A4}, if select the step size as
	\begin{align}
		\eta <\min \left\{ {1, \frac{1}{a},\frac{2m}{{{a^2}{{K }^2}}} }\right\},\label{f}
	\end{align}where $a=M+\frac{2(1-w_{min})}{\lambda}$, then by DOBOC-$K$ algorithm, there holds
	\begin{align}
	F(x_{k+1})-F(x_*)\le &\left[1- {\frac{2m^2\eta-ma^2\eta^2K^2}{a}} \right](F(x_k)-F(x_*)) +\frac{a^3(2a)^{\frac{3}{2}}\eta^3K^3L}{6m^3}(F(x_k)-F(x_*))^{\frac{3}{2}}.\label{ddgx}
\end{align}
\end{lemma}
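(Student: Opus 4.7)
The plan is to combine Lemma \ref{Lm4} (the cubic-error Taylor bound on $F$) with the closed-form expression \eqref{hsf} for the DOBOC-$K$ direction, and then reduce the resulting inner products to scalar inequalities in $e_k := F(x_k)-F(x_*)$ via Proposition \ref{pp1} together with the strong-convexity/smoothness bounds \eqref{mu}--\eqref{L}. Throughout I will write $g := \nabla F(x_k)$, $H := \nabla^2 F(x_k)$, $A := I-(I-\eta H)^K$, and $d_k := \hat g_{K-1}(x_k)$; specializing \eqref{hsf} to index $K-1$ in place of $k$ yields $d_k = A H^{-1} g$, so the DOBOC-$K$ step is $x_{k+1}-x_k = -d_k$.

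Applying Lemma \ref{Lm4} at $y=x_{k+1}$, $x=x_k$ and subtracting $F(x_*)$ gives $e_{k+1} \le e_k - g^T d_k + \tfrac{1}{2}\,d_k^T H d_k + \tfrac{L}{6}\|d_k\|^3$. Since $A$ is a polynomial in $H$ and therefore commutes with $H$, I bound the two quadratic pieces spectrally. Using $\lambda_{\min}(A) = 1-(1-\eta m)^K \ge \eta m$ (which follows from $(1-\eta m)^{K-1}\le 1$) together with $H^{-1}\succeq (1/a)I$ from Proposition \ref{pp1}, I get $g^T d_k = g^T A H^{-1} g \ge (\eta m/a)\|g\|^2$. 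In the opposite direction, Bernoulli's inequality $1-(1-\eta\mu)^K \le K\eta\mu$ on $\mu\in[m,a]$ yields $\lambda_{\max}(A^2 H^{-1}) \le a K^2\eta^2$, whence $d_k^T H d_k \le a K^2\eta^2\|g\|^2$. Adding the two and invoking $\|g\|^2 \ge 2m\,e_k$ from \eqref{mu} reproduces exactly the linear coefficient $-\tfrac{2m^2\eta - m a^2\eta^2 K^2}{a}\,e_k$ displayed in \eqref{ddgx}; the step-size restriction $\eta < 2m/(a^2K^2)$ from \eqref{f} is precisely what keeps this coefficient positive, i.e., the contraction factor less than one.

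For the cubic remainder, submultiplicativity of the spectral norm gives $\|d_k\|\le \|A\|\,\|H^{-1}\|\,\|g\| \le (Ka\eta)(1/m)\|g\|$, and smoothness $\|g\|^2 \le 2a\,e_k$ from \eqref{L} then produces $\|d_k\|^3 \le (Ka\eta/m)^3 (2a)^{3/2} e_k^{3/2}$, which, after multiplication by $L/6$, is exactly the cubic coefficient in \eqref{ddgx}. The remaining step-size conditions $\eta<1$ and $\eta<1/a$ in \eqref{f} are only needed to keep $(I-\eta H)^K$ spectrally between $0$ and $I$ so that the Bernoulli estimates are valid.

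The main subtlety I expect will be the lower bound on $g^T d_k$: the tighter spectral estimate $\lambda_{\min}(AH^{-1})=\min_\mu \tfrac{1-(1-\eta\mu)^K}{\mu}\ge \eta$ would produce a linear coefficient of $2m\eta$ instead of $2m^2\eta/a$, so to obtain exactly the form stated in \eqref{ddgx} one must bound $\lambda_{\min}(A)$ and $\lambda_{\min}(H^{-1})$ separately and let the ratio $m/a$ emerge naturally. Once this bookkeeping is fixed, the rest reduces to routine matrix-norm and strong-convexity manipulations.
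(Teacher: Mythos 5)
Your proposal is correct and follows essentially the same route as the paper's proof: rewrite the DOBOC-$K$ step in centralized form via \eqref{hsf}, apply the cubic Taylor bound of Lemma \ref{Lm4}, bound the spectrum of $I-(I-\eta\nabla^2F(x_k))^K$ via Bernoulli's inequality, and convert $\|\nabla F(x_k)\|$ to the suboptimality gap using \eqref{mu}--\eqref{L}. The only difference is bookkeeping (you bound $\lambda_{\min}(A)$ and $\lambda_{\min}(H^{-1})$ separately and pass to $\|g\|^2$ immediately, while the paper carries $\nabla F^T H^{-1}\nabla F$ one step longer), and the two are algebraically equivalent.
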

\begin{proof}
The DOBOC-$K$ algorithm can be rewritten in a centralized form $x_{k+1}=x_k-h\nabla^2F(x_k)^{-1}\nabla F(x_k)$, where $h=I-(I-\eta \nabla^2F(x_k))^{K}$. Assuming $\eta < \frac{1}{a}$, it can be easily shown that
	\begin{align}
		\lambda_1(h)&\le 1-(1-\eta K\lambda_1(\nabla^2F(x_k)))=\eta K\ \lambda_1(\nabla^2F(x_k))\le a\eta K\nonumber\\
		\lambda_n(h)&\ge1-(1-\eta\lambda_n(\nabla^2F(x_k)))\ge \eta \ \lambda_n(\nabla^2F(x_k))\ge m\eta>0.\nonumber
	\end{align} by using the Bernoulli inequality  $(1-x)^k \ge 1 - kx$ for all $x \in (0,1)$ \cite{mitrinovic2013classical} .
	It can be verified from
	\begin{align}
		&h\nabla^2F(x)^{-1}\nonumber\\
		=&\nabla^2F(x)^{-1}-(I-\eta \nabla^2F(x))(I-\eta \nabla^2F(x))\cdots(I-\eta \nabla^2F(x))\nabla^2F(x)^{-1}\nonumber\\
		=&\nabla^2F(x)^{-1}-(I-\eta \nabla^2F(x))(I-\eta \nabla^2F(x))\cdots\nabla^2F(x)^{-1}(I-\eta \nabla^2F(x))\nonumber\\
		&\vdots\nonumber\\
		=&\nabla^2F(x)^{-1}-\nabla^2F(x)^{-1}(I-\eta \nabla^2F(x))(I-\eta \nabla^2F(x))\cdots(I-\eta \nabla^2F(x))\nonumber\\
		=&\nabla^2F(x)^{-T}h^T\nonumber
	\end{align}and $\lambda_n(h\nabla^2F(x)^{-1})>\lambda_n(h)\lambda_n(\nabla^2F(x)^{-1})>0$ that $h\nabla^2F(x)^{-1}\succ 0$. We can also prove that $\nabla^2F(x)h\succ0$ by using the similar method. Substituting $y$ and $x$ with $x_{k+1}$ and $x_k$ respectively in Lemma \ref{Lm4} yields
	\begin{align}
		F({x}_{k+1})\le& F(x_k)+\nabla F(x_k)^T({x_{k+1}}-x_k)+\frac{1}{2}({x_{k+1}}-x_k)^T\nabla^2F(x_k)({x_{k+1}}-x_k)+\frac{L}{6}||x_{k+1}-x_k||^3\nonumber\\
		=&F(x_k)-\nabla F(x_k)^Th\nabla^2F(x_k)^{-1}\nabla F(x_k)+\frac{1}{2}\nabla F(x_k)^T\nabla^2F(x_k)^{-T} h^T\nabla^2F(x_k)h\nabla^2F(x_k)^{-1}\nonumber\\
		&\times\nabla F(x_k)+\frac{L}{6}||h\nabla^2F(x_k)^{-1}\nabla F(x_k)||^3\nonumber\\
		\le&F(x_k)-m\eta\nabla F(x_k)^T\nabla^2F(x_k)^{-1}\nabla F(x_k)+\frac{a^2\eta^2K^2}{2}\nabla F(x_k)^T\nabla^2F(x_k)^{-1}\nabla F(x_k)\nonumber\\
		&+\frac{a^3\eta^3K^3L}{6}||\nabla^2F(x_k)^{-1}\nabla F(x_k)||^3\nonumber\\
		\le&F(x_k)-\frac{2m\eta-a^2\eta^2K^2}{2a}\nabla F(x_k)^T\nabla F(x_k)+\frac{a^3\eta^3K^3L}{6m^3}||\nabla F(x_k)||^3.\label{bds1}
	\end{align}
	From  \eqref{mu} and \eqref{L}, one has
	\begin{align}
		F(x_*)\ge F(x_k)-\frac{1}{2m}\nabla F(x_k)^T\nabla F(x_k), \label{key1}\\
		F(x_*)\le F(x_k)-\frac{1}{2a}\nabla F(x_k)^T\nabla F(x_k). \label{key2}
	\end{align}
Setting $\eta < \frac{2m}{\left[a(K-1)\right]^2}$ and substituting \eqref{key1} and \eqref{key2} into \eqref{bds1}, one has
	\begin{align}
		F(x_{k+1})-F(x_*)\le &\left[1- {\frac{2m^2\eta-ma^2\eta^2K^2}{a}} \right](F(x_k)-F(x_*)) +\frac{a^3(2a)^{\frac{3}{2}}\eta^3K^3L}{6m^3}(F(x_k)-F(x_*))^{\frac{3}{2}}.\nonumber
	\end{align} 
\end{proof}
The theorem below shows that the DOBOC-$K$ algorithm can converge to the optimal value $F(x_*)$ with at least linear convergence rate.
\begin{theorem}\label{Th1}
		Consider $x_{k}$ are generated by the DOBOC-$K$. If Assumptions \ref{A1} and \ref{A4} hold, and  the step size is chosen to satisfy \eqref{f} and
\begin{align}
		\eta <\min \left\{ {\frac{m}{{{a^2}{{K}^2}}},{{\left[ {\frac{{6{m^5}}}{{{a^4}{{(2a)}^{\frac{2}{3}}}{K}^3L(F({x_0}) - F({x_*}))^{\frac{1}{2}}}}} \right]}^{\frac{1}{2}}}} \right\},\nonumber
\end{align} then the sequence $F(x_k)$ converges to the optimal argument $F(x_*)$ at least linear as 
	\begin{align}
			F(x_{k})-F(x_*)\le (1-{\scriptsize \epsilon})^k(F(x_0)-F(x_*)),\nonumber
		\end{align}where the constant $0<\epsilon<1$ is given by
	\begin{align}
			 \epsilon=\frac{{2{m^2}\eta  - m{a^2}{}{\eta ^2}{K}^2}}{a} - \frac{a^3(2a)^{\frac{3}{2}}\eta^3K^3L}{6m^3}(F(x_0)-F(x_*))^{\frac{1}{2}}.\nonumber
		\end{align}
	\end{theorem}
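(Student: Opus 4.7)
The plan is to establish the claim by induction on $k$, using Lemma \ref{Lm5} as the one-step bound and carefully choosing the induction hypothesis so that the quadratic term $(F(x_k)-F(x_*))^{3/2}$ can be absorbed into the linear rate $(1-\epsilon)$.

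First, I would rewrite the conclusion of Lemma \ref{Lm5} by factoring out $F(x_k)-F(x_*)$, yielding
\begin{align*}
F(x_{k+1})-F(x_*) \le (F(x_k)-F(x_*))\Bigl[A + B\,(F(x_k)-F(x_*))^{1/2}\Bigr],
\end{align*}
where $A := 1-\tfrac{2m^2\eta - ma^2\eta^2 K^2}{a}$ and $B := \tfrac{a^3(2a)^{3/2}\eta^3K^3L}{6m^3}$. Observe that with this notation the constant $\epsilon$ defined in the theorem is exactly $1 - A - B(F(x_0)-F(x_*))^{1/2}$, so the bound we wish to prove after one step is precisely $A + B(F(x_0)-F(x_*))^{1/2} = 1-\epsilon$.

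Next, I would perform induction on $k$. The base case $k=0$ is immediate. For the inductive step, assume $F(x_k)-F(x_*) \le (1-\epsilon)^k\bigl(F(x_0)-F(x_*)\bigr)$. Since $0<1-\epsilon\le 1$, taking square roots gives $(F(x_k)-F(x_*))^{1/2} \le (F(x_0)-F(x_*))^{1/2}$. Substituting this monotonicity bound into the factored form above yields
\begin{align*}
F(x_{k+1})-F(x_*) \le (F(x_k)-F(x_*))\bigl[A + B\,(F(x_0)-F(x_*))^{1/2}\bigr] = (1-\epsilon)(F(x_k)-F(x_*)),
\end{align*}
and combining with the inductive hypothesis gives the desired bound at step $k+1$.

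The main remaining task, and the step I expect to be the most delicate, is to verify that the prescribed step-size conditions guarantee $\epsilon\in(0,1)$, since the induction would be vacuous otherwise. For $\epsilon>0$, the condition $\eta<\tfrac{m}{a^2K^2}$ ensures $ma^2\eta^2K^2< m^2\eta$, hence $\tfrac{2m^2\eta-ma^2\eta^2K^2}{a}>\tfrac{m^2\eta}{a}$; then the bound $\eta<\bigl[\tfrac{6m^5}{a^4(2a)^{3/2}K^3L(F(x_0)-F(x_*))^{1/2}}\bigr]^{1/2}$ forces $B(F(x_0)-F(x_*))^{1/2}<\tfrac{m^2\eta}{a}$, so $\epsilon>0$. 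For $\epsilon<1$, the conditions $\eta<1/a$ from \eqref{f} together with $m\le a$ suffice to keep $\tfrac{2m^2\eta-ma^2\eta^2K^2}{a}<1$, which dominates the subtracted positive term. Once $\epsilon\in(0,1)$ is confirmed, the induction closes and the claimed linear rate follows.
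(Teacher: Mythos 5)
Your argument is correct and is essentially the paper's own proof in slightly different bookkeeping: the paper rewrites Lemma \ref{Lm5} as $F(x_{k+1})-F(x_*)\le(1-\beta_k)(F(x_k)-F(x_*))$, shows $0<\beta_0\le\beta_k<1$ by exploiting the monotone decrease of $F(x_k)-F(x_*)$, and sets $\epsilon=\beta_0$, which is exactly your induction with the fixed $\epsilon=1-A-B(F(x_0)-F(x_*))^{1/2}$. Two side remarks: the exponent $(2a)^{3/2}$ you use in the step-size condition is the one actually required to make $B(F(x_0)-F(x_*))^{1/2}<\tfrac{m^2\eta}{a}$ (the theorem's $(2a)^{2/3}$ is an internal typo of the paper), and your $\epsilon<1$ check, stated a bit loosely, does go through via $2m^2\eta-ma^2\eta^2K^2\le m\big(2a\eta K-(a\eta K)^2\big)\le m$, so that $\epsilon\le m/a<1$, which is precisely the paper's estimate.
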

\begin{proof}
    From lemma \ref{Lm5}, we can rewrite \eqref{ddgx} as
	\begin{align}
		F(x_{k+1})-F(x_*)\le(1-\beta_k)(F(x_k)-F(x_*)),\nonumber
	\end{align}where
\begin{align}
	\beta_k=\frac{{2{m^2}\eta  - m{a^2}{}{\eta ^2}{K}^2}}{a} - \frac{a^3(2a)^{\frac{3}{2}}\eta^3K^3L}{6m^3}(F(x_k)-F(x_*))^{\frac{1}{2}}.\label{beta}
\end{align}
It easily to find
	\begin{align}
		\beta_k\le \frac{2ma\eta K- m{a^2}{\eta ^2}{{K}^2}}{a}\le \frac{m}{a}<1.\nonumber
	\end{align}
	We now turn to proof $\beta_k>0$.  Notice that $2-\frac{a^2\eta K^2}{m}>1$ when $\eta<\frac{m}{a^2K^2}$, thus 
	\begin{align}
		\beta_0=&\frac{{2{m^2}\eta  - m{a^2}{}{\eta ^2}{K}^2}}{a} - \frac{a^3(2a)^{\frac{3}{2}}\eta^3K^3L}{6m^3}(F(x_0)-F(x_*))^{\frac{1}{2}}\nonumber\\
		\ge&\frac{m^2\eta}{a}- \frac{a^3(2a)^{\frac{3}{2}}\eta^3K^3L}{6m^3}(F(x_0)-F(x_*))^{\frac{1}{2}}.\nonumber
	\end{align}
	Set $\eta<{{\left[ {\frac{{6{m^5}}}{{{a^4}{{(2a)}^{\frac{2}{3}}}{K^3}L(F({x_0}) - F({x_*}))^{\frac{1}{2}}}}} \right]}^{\frac{1}{2}}}$, then $\beta_0>0$.  It immediately get $F({x_1}) - F({x_*})<F({x_0}) - F({x_*})$, which implys  $\beta_0<\beta_1$ from \eqref{beta}. Using the similar method, we can prove $0<\beta_k<\beta_{k+1}<1$ for all index $k$. Set $\epsilon=\beta_0$, the proof is completed.
\end{proof}

\section{CONCLUSION}\label{s5}
In this paper, we proposed a new algorithm called DOBOC to solve distributed optimization problems. The DOBOC algorithm is based on the penalty method interpretation of distributed optimization and optimal control theory. It successfully integrates second-order information into distributed optimization without requiring the inverse of the Hessian matrix. A connection has been established between DOBOC and traditional optimization methods. To balance the trade-off between the number of iterations and communication, we also proposed a variant of the DOBOC, named DOBOC-$K$. The results on the convergence analysis of DOBOC and DOBOC-$K$ have been achieved, and both algorithms have demonstrated superior performance.

\bibliographystyle{IEEEtran}
\bibliography{reference}

\begin{thebibliography}{10}
\providecommand{\url}[1]{#1}
\csname url@samestyle\endcsname
\providecommand{\newblock}{\relax}
\providecommand{\bibinfo}[2]{#2}
\providecommand{\BIBentrySTDinterwordspacing}{\spaceskip=0pt\relax}
\providecommand{\BIBentryALTinterwordstretchfactor}{4}
\providecommand{\BIBentryALTinterwordspacing}{\spaceskip=\fontdimen2\font plus
\BIBentryALTinterwordstretchfactor\fontdimen3\font minus
  \fontdimen4\font\relax}
\providecommand{\BIBforeignlanguage}[2]{{%
\expandafter\ifx\csname l@#1\endcsname\relax
\typeout{** WARNING: IEEEtran.bst: No hyphenation pattern has been}%
\typeout{** loaded for the language `#1'. Using the pattern for}%
\typeout{** the default language instead.}%
\else
\language=\csname l@#1\endcsname
\fi
#2}}
\providecommand{\BIBdecl}{\relax}
\BIBdecl

\bibitem{khan2009diland}
U.~A. Khan, S.~Kar, and J.~M. Moura, ``Diland: An algorithm for distributed
  sensor localization with noisy distance measurements,'' \emph{IEEE
  Transactions on Signal Processing}, vol.~58, no.~3, pp. 1940--1947, 2009.

\bibitem{rabbat2004distributed}
M.~Rabbat and R.~Nowak, ``Distributed optimization in sensor networks,'' in
  \emph{Proceedings of the 3rd international symposium on Information
  processing in sensor networks}, 2004, pp. 20--27.

\bibitem{schizas2007consensus}
I.~D. Schizas, A.~Ribeiro, and G.~B. Giannakis, ``Consensus in ad hoc wsns with
  noisy links—part i: Distributed estimation of deterministic signals,''
  \emph{IEEE Transactions on Signal Processing}, vol.~56, no.~1, pp. 350--364,
  2007.

\bibitem{mota2014distributed}
J.~F. Mota, J.~M. Xavier, P.~M. Aguiar, and M.~P{\"u}schel, ``Distributed
  optimization with local domains: Applications in mpc and network flows,''
  \emph{IEEE Transactions on Automatic Control}, vol.~60, no.~7, pp.
  2004--2009, 2014.

\bibitem{cao2012overview}
Y.~Cao, W.~Yu, W.~Ren, and G.~Chen, ``An overview of recent progress in the
  study of distributed multi-agent coordination,'' \emph{IEEE Transactions on
  Industrial informatics}, vol.~9, no.~1, pp. 427--438, 2012.

\bibitem{lopes2008diffusion}
C.~G. Lopes and A.~H. Sayed, ``Diffusion least-mean squares over adaptive
  networks: Formulation and performance analysis,'' \emph{IEEE Transactions on
  Signal Processing}, vol.~56, no.~7, pp. 3122--3136, 2008.

\bibitem{koppel2018decentralized}
A.~Koppel, S.~Paternain, C.~Richard, and A.~Ribeiro, ``Decentralized online
  learning with kernels,'' \emph{IEEE Transactions on Signal Processing},
  vol.~66, no.~12, pp. 3240--3255, 2018.

\bibitem{lee2020optimization}
D.~Lee, N.~He, P.~Kamalaruban, and V.~Cevher, ``Optimization for reinforcement
  learning: From a single agent to cooperative agents,'' \emph{IEEE Signal
  Processing Magazine}, vol.~37, no.~3, pp. 123--135, 2020.

\bibitem{bedi2019asynchronous}
A.~S. Bedi, A.~Koppel, and K.~Rajawat, ``Asynchronous online learning in
  multi-agent systems with proximity constraints,'' \emph{IEEE Transactions on
  Signal and Information Processing over Networks}, vol.~5, no.~3, pp.
  479--494, 2019.

\bibitem{zhang2022distributed}
J.~Zhang, K.~You, and T.~Ba{\c{s}}ar, ``Distributed adaptive newton methods
  with global superlinear convergence,'' \emph{Automatica}, vol. 138, p.
  110156, 2022.

\bibitem{shamir2014communication}
O.~Shamir, N.~Srebro, and T.~Zhang, ``Communication-efficient distributed
  optimization using an approximate newton-type method,'' in
  \emph{International conference on machine learning}.\hskip 1em plus 0.5em
  minus 0.4em\relax PMLR, 2014, pp. 1000--1008.

\bibitem{wang2018giant}
S.~Wang, F.~Roosta, P.~Xu, and M.~W. Mahoney, ``Giant: Globally improved
  approximate newton method for distributed optimization,'' \emph{Advances in
  Neural Information Processing Systems}, vol.~31, 2018.

\bibitem{soori2020dave}
S.~Soori, K.~Mishchenko, A.~Mokhtari, M.~M. Dehnavi, and M.~Gurbuzbalaban,
  ``Dave-qn: A distributed averaged quasi-newton method with local superlinear
  convergence rate,'' in \emph{International conference on artificial
  intelligence and statistics}.\hskip 1em plus 0.5em minus 0.4em\relax PMLR,
  2020, pp. 1965--1976.

\bibitem{mokhtari2016network}
A.~Mokhtari, Q.~Ling, and A.~Ribeiro, ``Network newton distributed optimization
  methods,'' \emph{IEEE Transactions on Signal Processing}, vol.~65, no.~1, pp.
  146--161, 2016.

\bibitem{bajovic2017newton}
D.~Bajovic, D.~Jakovetic, N.~Krejic, and N.~K. Jerinkic, ``Newton-like method
  with diagonal correction for distributed optimization,'' \emph{SIAM Journal
  on Optimization}, vol.~27, no.~2, pp. 1171--1203, 2017.

\bibitem{mokhtari2016decentralized}
A.~Mokhtari, W.~Shi, Q.~Ling, and A.~Ribeiro, ``A decentralized second-order
  method with exact linear convergence rate for consensus optimization,''
  \emph{IEEE Transactions on Signal and Information Processing over Networks},
  vol.~2, no.~4, pp. 507--522, 2016.

\bibitem{eisen2019primal}
M.~Eisen, A.~Mokhtari, and A.~Ribeiro, ``A primal-dual quasi-newton method for
  exact consensus optimization,'' \emph{IEEE Transactions on Signal
  Processing}, vol.~67, no.~23, pp. 5983--5997, 2019.

\bibitem{mokhtari2016dqm}
A.~Mokhtari, W.~Shi, Q.~Ling, and A.~Ribeiro, ``Dqm: Decentralized
  quadratically approximated alternating direction method of multipliers,''
  \emph{IEEE Transactions on Signal Processing}, vol.~64, no.~19, pp.
  5158--5173, 2016.

\bibitem{zhang2021newton}
J.~Zhang, Q.~Ling, and A.~M.-C. So, ``A newton tracking algorithm with exact
  linear convergence for decentralized consensus optimization,'' \emph{IEEE
  Transactions on Signal and Information Processing over Networks}, vol.~7, pp.
  346--358, 2021.

\bibitem{varagnolo2015newton}
D.~Varagnolo, F.~Zanella, A.~Cenedese, G.~Pillonetto, and L.~Schenato,
  ``Newton-raphson consensus for distributed convex optimization,'' \emph{IEEE
  Transactions on Automatic Control}, vol.~61, no.~4, pp. 994--1009, 2015.

\bibitem{xu2023optimization}
Y.~Xu, Z.~Guo, H.~Wang, and H.~Zhang, ``Optimization method based on optimal
  control,'' \emph{arXiv preprint arXiv:2309.05280}, 2023.

\bibitem{zhang2023optimization}
H.~Zhang and H.~Wang, ``Optimization methods rooting in optimal control,''
  \emph{arXiv preprint arXiv:2312.01334}, 2023.

\bibitem{wang2024superlinear}
H.~Wang, Y.~Xu, Z.~Guo, and H.~Zhang, ``Superlinear optimization algorithms,''
  \emph{arXiv preprint arXiv:2403.11115}, 2024.

\bibitem{naidu2018optimal}
D.~S. Naidu, \emph{Optimal Control Systems}.\hskip 1em plus 0.5em minus
  0.4em\relax Boca Raton: CRC Press, 2018.

\bibitem{boyd2004convex}
S.~Boyd and L.~Vandenberghe, \emph{Convex optimization}.\hskip 1em plus 0.5em
  minus 0.4em\relax Cambridge university press, 2004.

\bibitem{apostol1991calculus}
T.~M. Apostol, \emph{Calculus, Volume 1}.\hskip 1em plus 0.5em minus
  0.4em\relax John Wiley \& Sons, 1991.

\bibitem{mitrinovic2013classical}
D.~S. Mitrinovic, J.~Pecaric, and A.~M. Fink, \emph{Classical and new
  inequalities in analysis}.\hskip 1em plus 0.5em minus 0.4em\relax Springer
  Science \& Business Media, 2013, vol.~61.

\end{thebibliography}

\end{document}